\newtheorem{thm}{Theorem}[section]
\newtheorem{cor}[thm]{Corollary}
\newtheorem{lemma}[thm]{Lemma}
\theoremstyle{definition}
\newtheorem{defn}[thm]{Definition}
\numberwithin{equation}{section}
\begin{document}
\title[Finite solvable groups with ...  ]
{Finite solvable groups with a  nilpotent  normal complement subgroup}%
\author[    ]{  Mohsen Amiri}%
\address{ Departamento de  Matem\'{a}tica, Universidade Federal do Amazonas.}%
\email{m.amiri77@gmail.com}%
\email{}
\subjclass[2020]{20D10, 20D40 }
\keywords{     	Finite solvable groups, Products of subgroups}%
\thanks{}
\thanks{}
 
\begin{abstract}
 Let $G$ be a finite solvable  group and $H$ a non-normal 
core-free     subgroup of $G$.  We show that   if   the normalizer of any non-trivial normal subgroup of  $Fit(H)$ is equal $H$, then $H$ has a nilpotent  normal  complement $K$ such that $G=KH$ and $KZ(Fit(H))$ is a Frobenius group.
\end{abstract}

\maketitle

\section{  Introduction }
A famous theorem of Burnside \cite{1} asserts that if a Sylow $p$-subgroup $P$ of
a finite $G$ lies in the centre of its normalizer, then $G$ is $p$-nilpotent, that is, $G$ has a
normal Hall $p'$-subgroup. Another well-known result due to Frobenius \cite{2} showed that a
finite group is $p$-nilpotent if and only if the normalizer of any non-trivial $p$-subgroup is $p$-nilpotent; or the quotient group of normalizer by centraliser of any non-trivial $p$-subgroup
of the group is a $p$-group. Also, Thompson (1964) showed that if $p$ is an odd prime and the groups $N_G(J(P))$ and $C_G(Z(P))$ both have normal $p$-complements for a Sylow $p$-subgroup of $G$, then $G$ has a normal $p$-complement.
Several mathematicians     have extended the above three theorems in different ways, for example see \cite{3,6}.
Let the group $G = KH$ be the product of two nilpotent subgroups $K$ and $H$.
If $G$ is finite, then it is
known that $G$ is soluble (Kegel and Wielandt \cite{12}) and that its Fitting length is bounded by the sum of the nilpotent classes of $K$ and $H$ (Gross \cite{15}) or in terms of the derived length of the factors (Parmeggiani \cite{18}, Pennington \cite{19}). 
 In this note,   we prove the following theorem:

\begin{thm}\label{comp22}
Let $G$ be a finite solvable  group and $H$ a non-normal    subgroup of $G$, and let $C:=Cor_G(H)\neq H$. If     the normalizer of any non-trivial  normal subgroup of  $Fit(\frac{H}{C})$ is equal $\frac{H}{C}$, then   $G=Fit(\frac{G}{C})\rtimes \frac{H}{C}$ and $Fit(\frac{G}{C})Z(Fit(\frac{H}{C}))$ is a Frobenius group.
\end{thm}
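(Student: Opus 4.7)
I would first reduce to the core-free case by passing to $G/C$ and $H/C$: the hypothesis, the non-normality, and the core-freeness all transfer directly, and the conclusion pulls back. Working in the reduced setting, set $F := \mathrm{Fit}(H)$ and $K := \mathrm{Fit}(G)$. An immediate consequence of the hypothesis is that $H$ is self-normalizing in $G$, since $N_G(H) \leq N_G(F) \leq N_G(Z(F)) = H$.

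The key technical step is to establish $K \cap H = 1$. Set $M := K \cap H$: it is normal in $H$ (because $K \lhd G$) and nilpotent (as a subgroup of $K$), so $M \leq F$ and hence $M \lhd F$. If $M \neq 1$, the hypothesis yields $N_G(M) = H$, so $N_K(M) = K \cap H = M$; but in a nilpotent group every proper subgroup is strictly contained in its normalizer, forcing $M = K$ and hence $K \leq H$, which contradicts $K \leq \mathrm{Cor}_G(H) = 1$.

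The Frobenius conclusion for $KZ(F)$ then follows at once. For any $z \in Z(F) \setminus \{1\}$, the cyclic subgroup $\langle z \rangle$ is normal in $F$, so by hypothesis $N_G(\langle z \rangle) = H$, whence $C_K(z) \leq N_K(\langle z \rangle) \leq K \cap H = 1$. Combined with $Z(F) \cap K \leq H \cap K = 1$, this shows that $KZ(F) = K \rtimes Z(F)$ is a Frobenius group with kernel $K$ and complement $Z(F)$.

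The remaining assertion $G = KH$ is, I expect, the main obstacle. From the identity $N_K(F^g) = K \cap H^g = (K \cap H)^g = 1$ (valid since $K \lhd G$), the group $K$ acts freely by conjugation on the set of $G$-conjugates of $F$, whose cardinality is $[G:H]$; the number of $K$-orbits is therefore $[G:H]/|K| = [G:KH]$, and proving $G = KH$ amounts to showing that $K$ acts transitively on the conjugates of $F$. I would attempt this by induction on $|G|$: pass to $G/V$ for a minimal normal subgroup $V \leq K$, verify that $HV/V$ inherits the hypothesis in $G/V$ (using $\mathrm{Fit}(HV/V) \cong F$ and compatibility of normalizers modulo $V$), apply the inductive conclusion to obtain $G/V = (K/V)(HV/V)$, and lift. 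The subtle point is confirming that $HV/V$ remains core-free in $G/V$: a hypothetical non-trivial normal subgroup of $G/V$ contained in $HV/V$ pulls back to $V(N \cap H)$ with $N \cap H \leq F$ acting fixed-point-freely on $K$, and excluding this possibility requires exploiting that $N_G(N \cap H) = H$ pins the normalizer exactly, together with $K \not\leq H$, to force $N \cap H = 1$.
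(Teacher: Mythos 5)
Your preparatory steps are correct, and your derivation of $K\cap H=1$ via the normalizer condition in the nilpotent group $K$ is a nice shortcut (the paper only obtains this at the very end of its induction); given that, the Frobenius property of $KZ(F)$ follows exactly as you say. But the statement's main assertion, $G=KH$, is precisely what you leave as a plan, and the plan is missing its central ingredient: the transfer of the hypothesis to $G/V$. For $1\neq U\lhd F$ the containment $HV/V\leq N_{G/V}(UV/V)$ is the easy direction, but equality is not ``compatibility of normalizers modulo $V$'': an element of $G$ may normalize $UV$ without normalizing $U$, so a priori $N_{G/V}(UV/V)$ can be strictly larger than $HV/V$. The paper's entire technical effort goes into this point, and it is not an induction on quotients alone: it sets $M:=N_G(UV)$, notes $M<G$ (else $UV/V$ would be a nontrivial normal subgroup of $G/V$ inside $HV/V$, against core-freeness), shows $\mathrm{Cor}_M(H)=1$ (a nontrivial core would have a nontrivial Fitting subgroup that is normal both in $M$ and in $F$, so its $G$-normalizer is $H$, forcing $M=H$, impossible since $V\leq M$ and $V\cap H=1$), applies the theorem inductively to the \emph{proper subgroup} $M$ to get $M=\mathrm{Fit}(M)\rtimes H$, and then pins down $\mathrm{Fit}(M)=V$ by a Frobenius normal-closure argument: for $u\in Z(F)\cap U$ of prime order, $\mathrm{Fit}(M)\langle u\rangle$ is a Frobenius group, so $\mathrm{Fit}(M)\leq\langle u^{\mathrm{Fit}(M)\langle u\rangle}\rangle\leq UV$, whence $\mathrm{Fit}(M)=V$ and $N_G(UV)=HV$. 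Without this induction-within-the-induction (or some substitute for it), your inductive step does not get off the ground.

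Two further points in your sketch need repair. In the core-freeness argument you assert that a hypothetical core $N/V$ of $HV/V$ pulls back to $V(N\cap H)$ with $N\cap H\leq F$; but $N\cap H$ is merely normal in $H$ and need not be nilpotent, so it need not lie in $F$. The paper instead passes to $\mathrm{Fit}(N\cap H)$, which is normal in $H$, hence lies in $F$ and meets $Z(F)$ nontrivially, and runs the fixed-point-free/normal-closure argument from a prime-order element there. Finally, even once the induction applies, it yields $G/V=\mathrm{Fit}(G/V)\rtimes HV/V$, and $\mathrm{Fit}(G/V)$ is a priori larger than $K/V$, so you do not immediately get $G/V=(K/V)(HV/V)$ as you wrote; to identify the resulting normal complement $W$ (with $W/V=\mathrm{Fit}(G/V)$) as $\mathrm{Fit}(G)$, one must show $W$ is nilpotent, which the paper does by observing that a prime-order element $z\in Z(F)$ acts fixed-point-freely on $W$ (since $C_W(z)\leq W\cap H=1$) and invoking Thompson's theorem on fixed-point-free automorphisms of prime order --- an ingredient absent from your proposal.
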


If $G=PSL(2,17)$ and  $H\in Syl_2(G)$, then 
$N_G(U)=H$ for any normal subgroup $U\neq 1$ of $H$. Hence,  Theorem \ref{comp22}, is not true for  
non-solvable groups. 
Also, we show that 
if $G$ is a non-solvable finite group and  $H\neq Cor_G(H)$ a   nilpotent subgroup of $G$ which is not a $2$-group, then for some    subgroup $U$ of $H$, we have  $H<N_G(U)<G$.   
\section{Results}

We start    with the following definition.
\begin{defn}\label{deff}
Let $G$ be a group, and let $H$ be a  proper subgroup of $G$. Let $C=Cor_G(H)$. 
We say $H$ is a  maximal normalizer whenever for any   normal subgroup $\frac{L}{C}\neq 1$ of $Fit(\frac{H}{C})$, we have $N_{\frac{G}{C}}(\frac{L}{C})=\frac{H}{C}$.
\end{defn}
 
The following lemma  shows that any nilpotent maximal normalizer is a Hall subgroup.

\begin{lemma}\label{hal}
Let $G$ be a finite group and   $H$ is a nilpotent maximal normalizer. Then
 
  (i) $\frac{H}{Cor_G(H)}$ is a Hall subgroup of $\frac{G}{Cor_G(H)}$.
  
  (ii) $\frac{H}{Cor_G(H)}$ is a maximal normalizer subgroup of $\frac{G}{Cor_G(H)}$.

\end{lemma}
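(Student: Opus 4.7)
The approach is to quotient out $C = Cor_G(H)$ at once and then exploit the unique Sylow decomposition of the nilpotent group $H/C$. Set $\bar G = G/C$ and $\bar H = H/C$. Standard bookkeeping gives $Cor_{\bar G}(\bar H) = Cor_G(H)/C = 1$, and since $\bar H$ is a quotient of the nilpotent group $H$ it is itself nilpotent, so $Fit(\bar H) = \bar H$. In this language the hypothesis ``$H$ is a nilpotent maximal normalizer'' reads exactly as: for every non-trivial normal subgroup $L$ of $\bar H$, $N_{\bar G}(L) = \bar H$. Assertion (ii) is then immediate, since the condition one must verify to call $\bar H$ a maximal normalizer in $\bar G$ (its core there being trivial) is precisely this statement.

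For (i) I fix a prime $p$ dividing $|\bar H|$ and let $P$ be the (unique) Sylow $p$-subgroup of $\bar H$. Since $\bar H$ is nilpotent, $P$ is characteristic in $\bar H = Fit(\bar H)$ and in particular a non-trivial normal subgroup, so the rephrased hypothesis gives $N_{\bar G}(P) = \bar H$. The goal is to show $P$ is already a Sylow $p$-subgroup of $\bar G$; achieving this for every $p$ dividing $|\bar H|$ yields $\gcd(|\bar H|, [\bar G : \bar H]) = 1$, i.e.\ $\bar H$ is a Hall subgroup of $\bar G$.

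To show $P \in Syl_p(\bar G)$, I embed $P$ in some $Q \in Syl_p(\bar G)$ and suppose for contradiction that $P < Q$. The classical fact that a proper subgroup of a $p$-group is strictly smaller than its normalizer inside that $p$-group gives $P < N_Q(P)$. But $N_Q(P) \le N_{\bar G}(P) = \bar H$, so $N_Q(P)$ is a $p$-subgroup of $\bar H$; since $P$ is the \emph{unique} Sylow $p$-subgroup of the nilpotent group $\bar H$, every $p$-subgroup of $\bar H$ lies in $P$, forcing $N_Q(P) \le P$. This contradicts $P < N_Q(P)$, so $P = Q$ and $P$ is Sylow in $\bar G$.

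There is no serious obstacle here: once the reduction to $C = 1$ is made and $Fit(\bar H) = \bar H$ is used to apply the hypothesis to each Sylow subgroup of $\bar H$, the result follows at once from the standard normalizer-of-proper-subgroup principle for $p$-groups. The only point worth checking carefully is that the maximal-normalizer hypothesis descends to $\bar G$, which is exactly what makes part (ii) collapse into a restatement of the assumption.
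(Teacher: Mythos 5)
Your proposal is correct and follows essentially the same route as the paper: for (i) both arguments take the Sylow $p$-subgroup $P$ of the nilpotent group $H/Cor_G(H)$, use the maximal-normalizer hypothesis to pin its normalizer down to $H/Cor_G(H)$, and then derive a contradiction from the fact that a proper subgroup of a $p$-group is properly contained in its normalizer; part (ii) is in both treatments an almost direct restatement of the definition once the core is observed to become trivial. Your version is in fact slightly cleaner, since you pass to $G/Cor_G(H)$ at the outset and apply the hypothesis to $P$ itself rather than to $Z(P)$, avoiding the paper's implicit identification of normalizers in $G$ with normalizers modulo the core.
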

\begin{proof}{ Let $C=Cor_G(H)$.

  (i) It is enough to show that every Sylow $p$-subgroup of $\frac{H}{C}$ is a Sylow $p$-subgroup of $\frac{G}{C}$. Suppose that $1\neq P\in H$ such that $P$ is not a normal subgroup of $G$ and $Q\in Syl_p(G)$ such that $P\subseteq Q$.  Since $N_G(H)$ is nilpotent, we have $N_G(H)\leq N_G(P)\leq N_G(Z(P))=H$ and so   $N_G(H)=N_G(Z(P))$.
 Now, if $P\neq Q$, then there exists $x\in N_Q(Z(P))\setminus P$ and so $x\in H$. On the other hand $N_Q(P)\leq N_G(Z(P))=H$,  which is a contradiction. Hence $P=Q$, as claimed.
 
 (ii) Let $1\neq \frac{L}{C}\leq Z(Fit(\frac{H}{C}))$ be a normal subgroup of $\frac{H}{C}$.
 Then $L\lhd H$. Since $L$ is not a normal subgroup of $G$, we have $N_G(L)=H$, by the definition of $H$.  
 Hence $N_G(\frac{L}{C})=\frac{H}{C}$. }
\end{proof}

An automorphism of a group is termed fixed-point-free if the only element of the group that is fixed under the action of the automorphism is the identity element. We need the following theorems related a group  with a fixed-point-free automorphism.
\begin{thm} (Burnside \cite{buu}) \label{burn}Let $\Phi$ be a fixed-point-free automorphism group on a group $G$.

(a) If the order of $\Phi$ is $pq$  for $p$ and $q$  not necessarily distinct primes, then $\Phi$
is cyclic.

(b) The Sylow $p$-subgroups of $\Phi$ are cyclic for $p>2$, they are cyclic or generalized quaternion
groups for $p=2$.
\end{thm}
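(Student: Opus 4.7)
\medskip
\noindent\textbf{Plan.} The natural move is to reinterpret the hypothesis structurally: the fixed-point-free action of $\Phi$ on $G$ is precisely the condition that the semidirect product $\Gamma = G \rtimes \Phi$ is a Frobenius group with Frobenius kernel $G$ and complement $\Phi$. Thus both parts of the theorem reduce to well-known structural properties of Frobenius complements, namely that every subgroup of order $pq$ is cyclic and that every Sylow subgroup is cyclic or (only at the prime $2$) generalized quaternion. I would therefore spend no effort on the semidirect-product reduction and focus on the two intrinsic statements.

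The key elementary ingredient for part (a) is the observation that a fixed-point-free automorphism $\alpha$ of order $2$ inverts every element of $G$. The map $g \mapsto \alpha(g)g^{-1}$ is injective on the finite group $G$, because $\alpha(g_1)g_1^{-1} = \alpha(g_2)g_2^{-1}$ forces $g_2^{-1}g_1$ to be a fixed point of $\alpha$ and hence trivial, so the map is a bijection; then for any $h = \alpha(g)g^{-1}$ a direct computation yields $\alpha(h) = h^{-1}$. This alone settles $|\Phi|\in\{4,2p\}$: two distinct involutions in $\Phi$ would both invert $G$, so their product would fix $G$ pointwise, contradicting the fixed-point-free hypothesis. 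Thus $\Phi$ must be cyclic in these cases. The same inversion principle also shows, in part (b), that a Sylow $2$-subgroup can contain at most one involution, which together with the classical classification of $2$-groups having a unique involution (cyclic or generalized quaternion) yields the stated structure at $p = 2$.

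For odd primes the strategy is to prove that every Sylow $p$-subgroup $P$ of $\Phi$ has a unique subgroup of order $p$, whence $P$ is forced to be cyclic. Assuming the contrary, $P$ would contain a subgroup $A \cong \mathbb{Z}/p\times\mathbb{Z}/p$ acting fixed-point-freely on $G$; the non-identity orbits of $A$ on $G$ all have size $p^2$, while for each of the $p+1$ order-$p$ subgroups of $A$ the orbits on $G$ have size $p$. A careful double counting, combined with the fact (Thompson) that $G$ is nilpotent so that $\Phi$ acts on an elementary abelian section of $G$, forces a non-trivial fixed vector of $A$, a contradiction. The case of $|\Phi|=pq$ with two distinct odd primes is handled similarly: in a non-cyclic group of order $pq$ one locates two elements of order $q$ whose product or whose joint action on a suitable $\Phi$-invariant section of $G$ produces a fixed point. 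The step I expect to be hardest is precisely this odd/odd case: the clean ``inversion'' trick available for involutions is unavailable, and one must either invoke the representation-theoretic fact that Frobenius complements embed in $\mathrm{GL}_n(\mathbb{C})$ so that no non-identity element has eigenvalue $1$, or run a more delicate orbit-counting argument on an abelianization leveraging Thompson's nilpotence theorem.
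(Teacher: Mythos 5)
The paper does not prove this statement at all: it is quoted verbatim from Burnside's book as a known classical result, so there is no internal proof to compare against; your proposal has to stand on its own, and as written it has real gaps. The involution half is fine: the bijectivity of $g\mapsto\alpha(g)g^{-1}$ (using finiteness of $G$), the conclusion that an order-$2$ element inverts $G$, hence that $\Phi$ has at most one involution, and the classical classification of $2$-groups with a unique involution correctly dispose of $|\Phi|\in\{4,2p\}$ and of the Sylow $2$-structure in (b).

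The odd cases, however, are only gestured at. For $A\cong\mathbb{Z}/p\times\mathbb{Z}/p$ ($p$ odd), a "careful double counting" on $G$ itself cannot produce the contradiction: if all orbits of $A$ on $G\setminus\{1\}$ were regular one would only get $|G|\equiv 1\pmod{p^2}$, which is arithmetically consistent, so the counting is not where the contradiction lives. The actual argument needs (i) coprimality of $|G|$ and $|\Phi|$ (easy, via orbit counting for prime-order elements, but never established in your sketch and needed for everything that follows), (ii) the coprime fixed-point lemma $C_{G/N}(\phi)=C_G(\phi)N/N$ so that the fixed-point-free condition survives passage to an $A$-invariant elementary abelian section $V$ of the nilpotent group $G$, and then (iii) a genuine linear-algebra step: decomposing $V\otimes\overline{\mathbb{F}_r}$ into $A$-weight spaces and using that every character of the noncyclic group $A$ has nontrivial kernel, or alternatively the coprime generation theorem $G=\langle C_G(a):a\in A^{\#}\rangle$. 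None of (ii)--(iii) is supplied. Worse, the nonabelian case of order $pq$ with distinct odd primes -- the only remaining case of (a) -- is left essentially open: citing the fact that a Frobenius complement has a complex representation in which no nontrivial element has eigenvalue $1$ does not by itself finish (one still needs the character computation: any constituent with a linear character gives the normal Sylow $p$-generator a fixed vector, so all constituents are induced of degree $q$, their character vanishes on elements of order $q$, and a vanishing sum of $q$ $q$-th roots of unity for $q$ prime must contain $1$, giving a fixed vector), and invoking that structural fact about Frobenius complements is uncomfortably close to circular, since Burnside's theorem stated here is one of the standard ingredients in establishing exactly that property. To make the proposal a proof you must carry out the coprimality reduction and the two character/weight arguments explicitly rather than deferring them.
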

\begin{thm}\label{thomm}(Thompson \cite{Rob} (10.5.4))
Let $G$ be a finite group and let $p$ be a prime. If $G$ has a
fixed-point-free automorphism $\alpha$ of order $p$, then $G$ is nilpotent.
\end{thm}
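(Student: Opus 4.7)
My plan is to argue by induction on $|G|$, assuming $G$ is a minimal counterexample: a non-nilpotent finite group admitting a fixed-point-free automorphism $\alpha$ of prime order $p$. First I would observe that the semidirect product $G \rtimes \langle\alpha\rangle$ is a Frobenius group with kernel $G$ and complement $\langle\alpha\rangle$; consequently $\gcd(|G|,p) = 1$. Standard coprime action then guarantees that for every $\alpha$-invariant normal subgroup $N \neq 1$ the induced automorphism $\bar{\alpha}$ on $G/N$ is again fixed-point-free, so $G/N$ is nilpotent by minimality. By the same token every proper $\alpha$-invariant subgroup of $G$ is nilpotent (the restriction of $\alpha$ is either trivial, forcing the subgroup to be $1$, or fixed-point-free of order $p$).

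For the base case $p = 2$ I would dispose of the theorem directly: the map $g \mapsto g^{-1} g^{\alpha}$ is injective (if $g^{-1}g^\alpha = h^{-1}h^\alpha$ then $hg^{-1}$ is $\alpha$-fixed, hence trivial), so it is a bijection onto $G$; applying $\alpha$ once more shows that this bijection sends every element to its inverse. Thus $\alpha$ inverts every element of $G$, and $G$ is abelian, in particular nilpotent. Henceforth I may take $p$ odd.

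The next step is to reduce to the solvable case and then analyse $F := \mathrm{Fit}(G)$, which is $\alpha$-invariant and a proper subgroup of $G$ (otherwise we are done). Since $G$ is solvable, $C_G(F) \leq F$, while by minimality $G/F$ is nilpotent. Decompose an elementary abelian $G$-invariant section $V$ of $F$ into $\langle\alpha\rangle$-isotypic components; coprime action makes $\alpha$ act fixed-point-freely on $V$, and because $G/F$ commutes with $\alpha$ these components are $G/F$-invariant. A module-theoretic argument on each component, exploiting that an $\mathbb{F}_q$-linear fixed-point-free action of prime order has no non-trivial invariant vector, forces $G/F$ to act trivially on $V$, and running this over all layers of $F$ gives $G = F\,C_G(F/\Phi(F)) = F$, the desired contradiction.

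The main obstacle is the solvability reduction for odd $p$. A soft Sylow argument gets one started: since $\langle\alpha\rangle$ acts on $\mathrm{Syl}_q(G)$ for every prime $q \mid |G|$, and $|\mathrm{Syl}_q(G)|$ is coprime to $p$, there exists an $\alpha$-invariant Sylow $q$-subgroup $S_q$, which is nilpotent by induction (or trivially, being a $p$-group). However, ruling out a nonabelian simple composition factor of $G$ cannot be achieved by $\alpha$-invariance considerations alone; in Thompson's original treatment it is carried out via delicate commutator-collection identities of Higman type, applied to hypothetical minimal simple sections to bound the class of nilpotent $\alpha$-invariant sections by a function of $p$ and then derive a structural contradiction. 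Once solvability is secured, the Fitting-subgroup step above completes the proof.
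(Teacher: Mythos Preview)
The paper does not prove this theorem at all: it is quoted verbatim as a classical result and attributed to Thompson via the reference to Robinson's textbook (10.5.4). So there is no ``paper's own proof'' to compare against; the authors simply invoke the theorem as a black box.

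That said, your outline contains a genuine error in the solvable step, and the hard part is not where you think it is. After reducing to $G$ solvable with $F=\mathrm{Fit}(G)$ proper and $G/F$ nilpotent, you decompose an elementary abelian chief factor $V\le F$ into $\langle\alpha\rangle$-isotypic pieces and assert that these pieces are $G/F$-invariant ``because $G/F$ commutes with $\alpha$''. But $\alpha$ acts \emph{fixed-point-freely} on the nontrivial group $G/F$, so the two actions on $V$ do \emph{not} commute, and there is no reason for the $\langle\alpha\rangle$-isotypic components to be $G/F$-stable. Even granting that, the next sentence is a non sequitur: knowing that $\alpha$ has no nonzero fixed vector on $V$ says nothing about how $G/F$ acts on $V$. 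The conclusion that $G/F$ centralises every chief factor inside $F$ simply does not follow from the argument given.

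In fact the solvable case is itself the deep part of Thompson's theorem, not a routine coda after solvability. The actual proof (Thompson's thesis; the version in Robinson~10.5.4) does not proceed by an isotypic-decomposition trick but through a delicate analysis involving Thompson's $J$-subgroup / normal $p$-complement machinery and signaliser-type arguments; your vague pointer to ``Higman-type identities'' is in the right spirit for bounding nilpotency class \emph{once} nilpotency is known, but it does not establish nilpotency. If you want a self-contained argument you will need to replace the flawed module step entirely.
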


 \begin{thm}
     \label{comp}
 Let $G$ be a finite solvable group, and let $\frac{H}{Cor_G(H)}$ be a    maximal normalier    of $\frac{G}{Cor_G(H)}$.
   Then  $\frac{G}{Cor_G(H)}=Fit(\frac{G}{Cor_G(H)})\rtimes \frac{H}{Cor_G(H)}$ and $Fit(\frac{G}{Cor_G(H)}) Z(Fit(\frac{H}{Cor_G(H)})$ is a Frobeius group.

\end{thm}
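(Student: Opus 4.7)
The plan is first to pass to $G/C$, where $C := Cor_G(H)$, and reduce to the case $C = 1$; under this reduction $H$ is core-free, non-normal, and the hypothesis reads: $N_G(L) = H$ for every $1 \neq L \lhd Fit(H)$. Setting $F := Fit(G)$, the theorem then splits into three claims: (a) $F \cap H = 1$; (b) $F \cdot Z(Fit(H)) = F \rtimes Z(Fit(H))$ is Frobenius with kernel $F$; (c) $G = FH$.

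For (a), I would mirror the argument of Lemma \ref{hal}. Put $N := F \cap H$; then $N \lhd H$ and $N$ is nilpotent, so $N \leq Fit(H)$ and $N \lhd Fit(H)$. If $N \neq 1$, the hypothesis gives $N_G(N) = H$, forcing $N_F(N) \leq F \cap H = N$. Since in a nilpotent group a proper subgroup is strictly contained in its normalizer, this forces $N = F$. Hence $F \leq H$ and therefore $F \leq Cor_G(H) = 1$, contradicting $Fit(G) \neq 1$ for a non-trivial solvable group. For (b), given any $1 \neq z \in Z(Fit(H))$, the cyclic subgroup $\langle z \rangle$ is central, hence normal, in $Fit(H)$, so the hypothesis yields $N_G(\langle z \rangle) = H$; consequently $C_G(z) \leq H$ and $C_F(z) \leq F \cap H = 1$. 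Together with $F \cap Z(Fit(H)) \leq F \cap H = 1$, this shows $F \rtimes Z(Fit(H))$ is a Frobenius group with kernel $F$ and complement $Z(Fit(H))$.

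Step (c) is the main obstacle. A preliminary consequence of (b) is that $\gcd(|F|, |Fit(H)|) = 1$: if a prime $p$ divided both, pick $z$ of order $p$ in $Z(Fit(H))$ (available because the Sylow $p$-subgroup of the nilpotent group $Fit(H)$ has non-trivial centre); then $z$ acts fixed-point-freely on $F$, yet a non-identity $p$-element acting on the non-trivial Sylow $p$-subgroup of $F$ must admit a non-trivial fixed point, a contradiction. Writing $\pi$ for the primes dividing $|Fit(H)|$, $F$ is thus a $\pi'$-group.

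To establish (c), my plan is induction on $|G|$. Choose a minimal normal subgroup $M$ of $G$ contained in $F$ (an elementary abelian $q$-group with $q \notin \pi$) and pass to $\bar G := G/M$, $\bar H := HM/M$. The delicate point will be to verify that $(\bar G, \bar H)$ again satisfies the maximal normalizer hypothesis modulo $Cor_{\bar G}(\bar H)$: one must identify the core in $\bar G$, check that non-trivial normal subgroups of the corresponding $Fit$-quotient pull back to non-trivial normal subgroups of $Fit(H)$, and invoke the hypothesis on $G$ to control their normalizers in $\bar G$. Granted this descent, the induction hypothesis gives the theorem for $\bar G$, and then, using $F \cap H = 1$ together with the $\pi/\pi'$-split, one pulls back the semidirect-product decomposition to obtain $G = FH$. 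The hardest piece of the whole argument is precisely this descent of the maximal-normalizer hypothesis under a minimal-normal quotient, and the subsequent reconstruction of the full semidirect product from its image in $\bar G$.
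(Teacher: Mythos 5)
Your steps (a) and (b) are correct: with $C=Cor_G(H)=1$, the subgroup $F\cap H$ is a nilpotent normal subgroup of $H$, hence normal in $Fit(H)$, and the normalizer hypothesis plus the fact that proper subgroups of the nilpotent group $F$ grow in their normalizers forces $F\cap H=1$; likewise $C_F(z)=1$ for every $1\neq z\in Z(Fit(H))$ gives the Frobenius statement for $F\,Z(Fit(H))$. These parallel what the paper does inside its induction. But the heart of Theorem \ref{comp} is the factorization $G=Fit(G)H$, and that is exactly the part you do not prove: you state the plan (induct on $|G|$, pass to $G/M$ for a minimal normal $M\leq F$, ``verify the descent of the maximal-normalizer hypothesis'') and explicitly flag the descent as the hardest open piece. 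That descent is genuinely nontrivial, because the hypothesis controls $N_G(L)$ for $1\neq L\lhd Fit(H)$, whereas in the quotient one must control $N_{G/M}(LM/M)=N_G(LM)/M$, and $N_G(LM)$ can a priori be much larger than $N_G(L)M$; similarly $Cor_{G/M}(HM/M)$ can a priori be nontrivial, and the preimage of $Fit(G/M)$ need not be nilpotent. Without these verifications the induction does not close, so the proposal has a real gap precisely where the theorem's content lies.

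For comparison, the paper's proof supplies this missing machinery as follows: it first shows $Cor_{G/N}(HN/N)=1$ by a Frobenius-kernel argument (an element $a\in Z(Fit(H))$ of prime order acts fixed-point-freely on $Fit(G)$, so $Fit(G)\leq\langle a^G\rangle$ lands inside the offending normal subgroup, forcing $Fit(G)=N$ and then a contradiction via a maximal complement containing $H$). Then, for each $1\neq U\lhd Fit(H)$, it sets $M:=N_G(UN)$, proves $Cor_M(H)=1$, and applies the induction hypothesis \emph{to the proper subgroup $M$} (not only to the quotient $G/N$) to get $M=Fit(M)H$; the Frobenius action of $u\in Z(Fit(H))\cap U$ then forces $Fit(M)\leq UN$, hence $Fit(M)=N$ and $N_{G/N}(UN/N)=HN/N$. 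Finally, after applying induction to $G/N$, the nilpotency of the preimage $V$ of $Fit(G/N)$ is extracted from the Frobenius group $VZ(Fit(H))$ (via fixed-point-free automorphisms of prime order, Theorem \ref{thomm}), which yields $Fit(G)=V$ and $G=V\rtimes H$. Your outline contains none of these steps, in particular not the key idea of running the induction on the proper normalizer $N_G(UN)$, so as written the proposal establishes only the intersection triviality and the Frobenius assertion, not the semidirect decomposition.
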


\begin{proof} {Let $Fit(\frac{G}{Cor_G(H)})=\frac{K}{Cor_G(H)}$.
    We proceed by induction on $|G|$.  We may assume that $Cor_G(H)=1$. 
  Let $N$ be a normal minimal subgroup of $G$.
  First suppose that $NH=G$.
  Since $N$ is a normal minimal subgroup, $H\cap N=1$ and  $N=Fit(G)$, because $Cor_G(H)=1$.
  Let  $Y=NZ(Fit(H))$. For any $a\in Z(Fit(H))\setminus \{1\}$, we have  $N_{G}(\langle a\rangle)=H$, so $C_Y(a)=Z(Fit(H))$. Therefore  $Y$ is a Frobenius group. So suppose that $NH\neq G$.
  We claim that $\frac{HN}{N}$ is a core free maximal normalizer subgroup of $\frac{G}{N}$.
Suppose for a contradiction  that  
$Cor_{\frac{G}{N}}(\frac{H N}{N}):=\frac{LN}{N}\neq 1$ where $L\lhd H$. Since $Fit(L)$ is a characteristic subgroup of $L$, we deduce that
 $V:=Fit(L)\lhd H$, and so $V\lhd Fit(H)$.
Since   $\frac{VN}{N}= Fit(\frac{LN}{N})$, we have $NV\lhd G$.
Let $a\in V\cap Z(Fit(H))\setminus\{1\}$ of order prime number $p$. Since 
$N_G(\langle a\rangle)=H$, we deduce that  $Fit(G) \langle a\rangle$ is a Frobenius group.
So $$Fit(G)\leq \langle a^G\rangle\leq VL\lhd G.$$
It follows that $Fit(G)=N$ is a normal minimal subgroup of $G$.
Then $G=NM$ where $M$ is a maximal subgroup of $G$ such that $N\cap M=1$.
  We may assume that $H\leq M$. Then $V\lhd M$.
By our assumption 
$M=N_G(V)=H$, which is a contradiction.
 Therefore  $Cor_{\frac{G}{N}}(\frac{HN}{N})=1$.
Let $1\neq U\lhd Fit(H)$, and let  
$M:=N_G(UN)$.
Since $Cor_G(\frac{HN}{N})=1$, we have $M\neq G$.
If $L:=Cor_{M}(H)\neq 1$, then 
$F:=Fit(L)\neq 1$. Since $L\lhd M$, we have $F\lhd M$. It follows that 
$M=N_{M}(F)=H$, which is a contradiction.
So $L=1$.
By induction hypothesis, 
$M=Fit(M)H$. 
Let $u\in Z(Fit(H))\cap U$ of prime order.
Since $Fit(M)\langle u\rangle$ is a Frobenius group, we have 
$$Fit(M)\leq \langle u^{Fit(M)\langle u\rangle}\rangle\leq UN\lhd M,$$ and so $Fit(M)=N$.
Hence $N_{G}(UN)=M=HN$. Consequently, $N_{\frac{G}{N}}(\frac{UN}{N})=\frac{HN}{N}$.
It follows from  $Fit(\frac{HN}{N})\cong Fit(\frac{H}{N\cap H})\cong Fit(H)$
that  $\frac{HN}{N}$ is a maximal normalizer of $\frac{G}{N}$, as claimed.
By induction hypothesis, 
$\frac{G}{N}=Fit(\frac{G}{N})\rtimes \frac{HN}{N}$.
Let $Fit(\frac{G}{N})=\frac{V}{N}$.
  It follows from $V\cap H\leq N$ and $H\cap N=1$, that  $V\cap H=1$.
 Since  $VZ(Fit(H))$ is  a Frobenius group, $V$ is a nilpotent group,
 so $Fit(G)=V$.

}

     \end{proof}

Now, we show that if a finite group $G$ has a nilotent maximal normalizer which is not a Sylow $2$-subgroup, then $G$ is a solvable group.
\begin{thm}\label{rem23}
Let $G$ be a finite group and $H$ 
 a  nilpotent maximal normalizer of $G$. Then  $G$ is a solvable group or $H\in Syl_2(G)$.
\end{thm}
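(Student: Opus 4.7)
The plan is to induct on $|G|$. Using Lemma~\ref{hal}, I may assume $Cor_G(H)=1$, so $H$ is a nilpotent Hall maximal normalizer of $G$. Suppose $G$ is non-solvable; it suffices to show $H$ is a $2$-group. Toward a contradiction, assume some odd prime $p$ divides $|H|$.

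First I would apply Thompson's normal $p$-complement theorem prime by prime. For each odd $q\mid|H|$, set $Q=\mathrm{Syl}_q(H)$, which equals $\mathrm{Syl}_q(G)$ by Lemma~\ref{hal}. Since $H$ is nilpotent, $Q\lhd H$, and $Z(Q), J(Q)$ are normal in $\mathrm{Fit}(H)=H$; the maximal-normalizer hypothesis gives $N_G(Z(Q))=N_G(J(Q))=H$, and $Z(Q)\le Z(H)$ yields $C_G(Z(Q))=H$. Both subgroups therefore coincide with the nilpotent, hence $q$-nilpotent, $H$, so Thompson's theorem (Introduction) yields a normal $q$-complement of $G$. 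Intersecting over all odd $q\mid|H|$ produces $K^{\ast}\lhd G$ with $G/K^{\ast}$ nilpotent of odd order equal to $|H_{\mathrm{odd}}|$ and $K^{\ast}\cap H=P_2$, the Sylow $2$-subgroup of $H$.

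Second, I would apply fixed-point-free techniques. Pick $a\in Z(H)$ of odd prime order; then $a$ acts coprimely on $K^{\ast}$ with $C_{K^{\ast}}(a)=P_2$. If $|H|$ is odd, then $P_2=1$, and Theorem~\ref{thomm} immediately yields $K^{\ast}$ nilpotent, whence $G=K^{\ast}\rtimes H$ is solvable, a contradiction. If $|H|$ is even, then for each odd prime $r\mid|K^{\ast}|$ an $\langle a\rangle$-invariant Sylow $r$-subgroup $Q_r$ satisfies $C_{Q_r}(a)\le Q_r\cap P_2=1$, so coprime action yields $Q_r=[Q_r,a]\le[K^{\ast},a]$. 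Hence $K^{\ast}/[K^{\ast},a]$ is a $2$-group, and the problem reduces to proving $[K^{\ast},a]$ solvable.

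This last reduction is the principal obstacle. Setting $T:=[K^{\ast},a]\cap P_2$, an order count using $K^{\ast}=[K^{\ast},a]\cdot P_2$ makes $T$ a Sylow $2$-subgroup of $[K^{\ast},a]$; since $T\lhd P_2$ and $H_{\mathrm{odd}}$ centralizes $P_2$, we have $T\lhd H$, so the maximal-normalizer hypothesis gives $N_{[K^{\ast},a]}(T)=T$. Moreover, $H_{\mathrm{odd}}$ embeds fixed-point-freely into $\mathrm{Aut}(Q_r)$ for every odd Sylow $Q_r$, so by Theorem~\ref{burn} every Sylow of $H_{\mathrm{odd}}$ is cyclic, whence $H_{\mathrm{odd}}$ itself is cyclic. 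I would finish by combining the self-normalizing Sylow-$2$ property with the cyclic coprime action of $H_{\mathrm{odd}}$ to show $[K^{\ast},a]$ is $2$-nilpotent; its normal $2$-complement then has odd order, hence is solvable by Feit--Thompson, making $[K^{\ast},a]$ and $G$ solvable and producing the desired contradiction. The delicate step --- the hardest in the argument --- is extracting the normal $2$-complement when $T$ is non-abelian, since Burnside's normal $p$-complement theorem does not apply directly in that case.
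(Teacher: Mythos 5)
Your opening reductions are sound and overlap with the paper's own first moves: passing to $Cor_G(H)=1$ via Lemma~\ref{hal}, invoking Thompson's normal complement theorem through $N_G(J(Q))=C_G(Z(Q))=H$ for odd $q$, and disposing of the case $|H|$ odd by Theorem~\ref{thomm}. But your argument stops exactly where all the difficulty lies, namely when $|H|$ is even and divisible by an odd prime, and the step you leave open (that $[K^{\ast},a]$ is $2$-nilpotent, or at least solvable) is a genuine gap, not a routine detail. The route you indicate cannot close it: a self-normalizing Sylow $2$-subgroup $T$, even one satisfying $N_G(U)=H$ for every $1\neq U\lhd H$, does not force $2$-nilpotency or solvability --- the paper's own example $G=PSL(2,17)$ with $H\in Syl_2(G)$ has precisely these properties, which is why the theorem's conclusion must allow the alternative $H\in Syl_2(G)$; so no purely ``local at $2$'' criterion of the kind you hope for can exist. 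In addition, your auxiliary claim that $H_{\mathrm{odd}}$ acts fixed-point-freely on each odd Sylow subgroup $Q_r$ (used to make $H_{\mathrm{odd}}$ cyclic via Theorem~\ref{burn}) is unjustified: the maximal-normalizer hypothesis controls $N_G(L)$ only for subgroups $L\lhd Fit(H)=H$, so for a non-central $h\in H_{\mathrm{odd}}$ you have no control over $C_G(h)$, and $Q_r$ was chosen only $\langle a\rangle$-invariant, not $H_{\mathrm{odd}}$-invariant.

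For comparison, the paper closes the hard case with two ingredients absent from your sketch. It takes a minimal counterexample, fixes one odd prime $p\mid |H|$, and writes $G=K\rtimes P$ by Thompson; if $K\cap H$ is not a Sylow $2$-subgroup, minimality applies to $K$ itself, where $K\cap H$ is again a nilpotent maximal normalizer; if it is, the paper reduces to $|P|=p$ by passing to the proper subgroup $U=KM$ (with $M=Z(P)$, or a maximal subgroup of $P$ when $P$ is abelian) and checking that the relevant subgroup of $H$ is again a maximal normalizer of $U$, so $U$, hence $K$, is solvable by minimality; finally, when $|P|=p$, it uses $C_G(P)=H$ nilpotent together with the de Melo--Shumyatsky theorem on coprime automorphisms \cite{Emerson} to conclude that $K$ is nilpotent, the desired contradiction. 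In other words, the mixed case requires either this inductive descent to subgroups of the form $KM$ or a substantial external result on coprime automorphisms of prime order with nilpotent centralizer; your proposal supplies neither, so as it stands the proof is incomplete.
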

\begin{proof}
{ Let $G$ be   the smallest counterexample. From the minimality  of $G$, and Lemma \ref{hal}(ii) we may assume that $Cor_G(H)=1$. Then  $G$ is not a solvable group and  $H$ is not  a $2$-group. 
Let $P\in Syl_p(H)$ where $p>2$.
Since $N_G(J(P))=H$ and $C_G(Z(P))=H$, by Thompson $p$-complement theorem, $G=K\rtimes P$. If $K\cap H$ is not a Sylow $2$-subgroup, then by minimality of $G$, $K$ is a solvable group, therefore $G$ is a solvable group, which is a contradiction.
So suppose that $K\cap H$ is a Sylow $2$-subgroup. 
First suppose that $|P|>p$.
If $P=Z(P)$, let $M$ be a maximal subgroup of $P$, and if $P\neq Z(P)$, let $M=Z(P)$. Let $U=KM$, and let $R$ be a non-trivial normal subgroup of $M$.
Since $R\leq Z(H)$, we have 
$R\lhd H$, so
$N_G(R)=H$, and hence $$N_U(R)=U\cap N_G(R)=U\cap H=M.$$ Consequently, $M$ is a maximal normalizer of $U$.
By minimality of $G$, $U$ is a solvable group, and so $G$ is a solvable group, which is a contradiction. So suppose that $|P|=p$.
Then $P$ acts on $K$ by conjugation.
Since $C_G(P)=H$ is a nilpotent group, from the main result of \cite{Emerson}, $K$ is a nilpotent group, which is a contradiction. It follows that $G$ is a solvable group.

}

\end{proof}
   \begin{cor}
Let $G$ be a finite non-solvable group, and let $H\neq Cor_G(H)$ be a nilpotent subgroup of $G$ which is not a $2$-group. Then $H$ has a   subgroup $U$ such that $H<N_G(U)<G$.
   \end{cor}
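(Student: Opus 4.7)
The plan is to argue by contradiction and invoke Theorem \ref{rem23}. I will assume the corollary fails, meaning no subgroup $U \leq H$ satisfies $H < N_G(U) < G$, and leverage this to show that $H$ is a nilpotent maximal normalizer of $G$ in the sense of Definition \ref{deff}. Since $G$ is non-solvable and $H$ is not a $2$-group, this will contradict Theorem \ref{rem23}.

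Set $C := Cor_G(H)$. To verify Definition \ref{deff} I must show $N_{G/C}(L/C) = H/C$ for every non-trivial normal subgroup $L/C$ of $Fit(H/C)$. Because $H$ is nilpotent, so is $H/C$, and thus $Fit(H/C) = H/C$. Consequently $L/C \lhd Fit(H/C)$ simply means $L \lhd H$ with $C \leq L \leq H$, and in particular $H \leq N_G(L)$. Applying the failure hypothesis to $U = L$, either $N_G(L) = H$ or $N_G(L) = G$. In the second case, $L \lhd G$ together with $L \leq H$ forces $L \leq Cor_G(H) = C$, which contradicts $L/C \neq 1$. So $N_G(L) = H$, and passing to quotients gives $N_{G/C}(L/C) = H/C$, completing the verification.

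Hence $H$ is a nilpotent maximal normalizer of $G$, and Theorem \ref{rem23} forces either $G$ to be solvable or $H \in Syl_2(G)$; both are excluded by hypothesis, yielding the desired contradiction. The only substantive step is the verification of Definition \ref{deff}, and no serious obstacle appears: the nilpotency of $H$ collapses ``normal subgroup of $Fit(H/C)$'' to the more tractable ``normal subgroup of $H/C$'', which is exactly the class of subgroups that the failure of the corollary controls, while the fact that $Cor_{G/C}(H/C) = 1$ rules out $N_G(L) = G$ automatically.
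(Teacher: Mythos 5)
Your argument is correct and is exactly how the paper intends the corollary to be read: it is the contrapositive of Theorem \ref{rem23}, with the failure of the conclusion forcing $H$ to be a nilpotent maximal normalizer (using $Fit(H/C)=H/C$ and $Cor_G(H)=C$ to exclude $N_G(L)=G$), which contradicts that $G$ is non-solvable and $H$ is not a $2$-group. The only tacit point, that $H<G$, is immediate since $H$ is nilpotent while $G$ is non-solvable.
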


By applying the result (II) of \cite{Ros}, we have the following theorem about non-solvable group with a nilpotent  maximal normalizer subgroup. 
    \begin{thm}\label{simp}
        Let $G$ be a finite non-solvable  group  and let $L=Fit(G)$. 
        If  $H$ is a
         nilpotent maximal normalizer of $G$, then  $L\leq H$ and $G$ has a unique minimal normal  subgroup $K$, 
        $K$ is a direct product of copies of a simple group $S\cong PSL(2,p)$ with dihedral Sylow $2$-subgroups, and $\frac{G}{K}$ is
a $2$-group.   
        
    \end{thm}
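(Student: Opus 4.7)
Plan: Our strategy is to proceed in three steps: reduce to the core-free setting, invoke result (II) of \cite{Ros}, and derive $L \leq H$ from the resulting structure.

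Since $G$ is non-solvable, Theorem \ref{rem23} forces $H \in Syl_2(G)$. We set $C := Cor_G(H)$, a normal $2$-subgroup of $G$, so $G/C$ is again non-solvable. By Lemma \ref{hal}(ii), the image $H/C$ is a core-free nilpotent maximal normalizer of $G/C$ and remains a Sylow $2$-subgroup. Applying the maximal normalizer condition to $Z(H/C) \lhd H/C$ gives
\[
N_{G/C}(H/C) \leq N_{G/C}(Z(H/C)) = H/C,
\]
so $H/C$ is self-normalizing in $G/C$.

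Next we invoke result (II) of \cite{Ros}, whose hypotheses match those just verified for $G/C$. Its conclusion should yield that $G/C$ has a unique minimal normal subgroup $\overline{K}$, that $\overline{K}$ is a direct product of copies of a simple group $S \cong PSL(2,p)$ with dihedral Sylow $2$-subgroups, and that $(G/C)/\overline{K}$ is a $2$-group. Let $K$ denote the preimage of $\overline{K}$ in $G$. To lift this structure to $G$, we argue that $C = 1$: the non-abelianness and uniqueness of $\overline{K}$ force $C_{G/C}(\overline{K}) = 1$, and then a short analysis of the extension $1 \to C \to K \to \overline{K} \to 1$ together with the maximal normalizer condition rules out $C \neq 1$, since a nontrivial $Z(C) \lhd G$ would be a normal abelian $2$-subgroup whose presence either violates $C_{G/C}(\overline{K}) = 1$ (via the induced conjugation action of $\overline{K}$ on $Z(C)$) or produces a proper normal subgroup of $H/C$ whose normalizer in $G/C$ properly contains $H/C$.

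Finally, the claim $L = Fit(G) \leq H$ reduces to showing $L = 1$. Since $K$ is a direct product of non-abelian simple groups and is the unique minimal normal subgroup of $G$, every normal subgroup of $G$ contained in $K$ is a sub-product of simple factors, hence non-nilpotent unless trivial; thus $L \cap K = 1$, giving $[L, K] \leq L \cap K = 1$ and $L \leq C_G(K)$. The same uniqueness argument forces $C_G(K) = 1$ (else it would contain a minimal normal subgroup of $G$ distinct from $K$), so $L = 1 \leq H$, completing the proof. The main obstacle we foresee is the precise matching of our hypotheses to those of Rose's result (II) and the clean elimination of the core $C$; once these are handled, the remaining Fitting-subgroup analysis is routine.
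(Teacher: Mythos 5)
Your opening step agrees with the paper: Theorem \ref{rem23} gives $H\in Syl_2(G)$, and passing to $G/Cor_G(H)$ via Lemma \ref{hal}(ii) is legitimate. The gap comes next. Rose's Result (II), as the paper uses it, is a statement about the group modulo its Fitting subgroup: it yields a unique minimal normal subgroup of $\frac{G}{L}$, $L=Fit(G)$, which is a direct product of simple groups with dihedral Sylow $2$-subgroups, with $\frac{G}{K}$ a $2$-group. You instead quote it as giving a unique minimal normal subgroup of $G/C$ itself ($C=Cor_G(H)$); that stronger conclusion would require knowing $Fit(G/C)=1$, which you never establish. The pivot of your argument, the ``short analysis'' showing $C=1$, is only a promise, and in fact it cannot be carried out: take $G=C_2\times PSL(2,17)$ and $H=C_2\times D$ with $D\in Syl_2(PSL(2,17))$. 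Here $C=Cor_G(H)=C_2$, $G/C\cong PSL(2,17)$, $H/C\cong D$, so the defining condition for a maximal normalizer reduces exactly to the property of $PSL(2,17)$ stated in the paper's introduction; thus $H$ is a nilpotent maximal normalizer of $G$, yet $C\neq 1$, $Fit(G)=C_2\neq 1$, and $G$ has two minimal normal subgroups. So $C=1$ and $L=1$ are false in general (the uniqueness clause must be read modulo $L$, which is what the paper's proof actually establishes), and your final deduction of $L\leq H$ from $L=1$ collapses with them.

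What your proposal is missing is precisely the paper's argument for $L\leq H$, which does not pass through $L=1$: if $L\nleq H$, then $LH$ is a product of two nilpotent subgroups, hence soluble by Kegel--Wielandt; Theorem \ref{comp} applied to $LH$ produces a Frobenius configuration with complement coming from $H$, and Burnside's Theorem \ref{burn} then forces the Sylow $2$-subgroup of that complement, i.e.\ $H$ itself, to be cyclic or generalized quaternion --- contradicting the dihedral Sylow $2$-subgroups furnished by Rose's theorem for $\frac{G}{L}$. Nothing in your sketch plays this role; your verification that $H/C$ is a self-normalizing Sylow $2$-subgroup, while correct, is not a substitute. To repair the argument you should (i) invoke Rose's Result (II) in its mod-$Fit(G)$ form rather than for $G/C$ itself, and (ii) supply the Kegel--Wielandt/Frobenius/Burnside step that excludes $L\nleq H$.
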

      \begin{proof}{
      Clearly, $H\nleq L$.
      From Theorem \ref{rem23},
      $H\in Syl_2(G)$.
      By Result (II) of \cite{Ros}, $\frac{G}{L}$ has a unique minimal normal subgroup $\frac{K}{L}$, $\frac{K}{L}$ is a direct product of copies of a simple group with dihedral Sylow $2$-subgroups, and $\frac{G}{K}$ is
a $2$-group.
      If $L\nleq H$, then $LH$ is a solvable group. From Theorem \ref{comp}, $LH$ is a Frobenius group with complement $H$.
By Theorem \ref{burn}, Sylow $2$-subgroup  of $H$ is cyclic or generalized quaternion group, which is a contradiction. Hence $L\leq H$.}
      \end{proof}
       
\section*{Acknowledgement}
The author would like to thank the referee for the useful comments and gratefully acknowledge the support of Coordenação de Aperfeiçoamento de Pessoal de Nível Superior (CAPES), Fundação de Apoio a Pesquisa do Distrito Federal(FAPDF).

 \end{document}